\numberwithin{equation}{section}
\newcommand{\euc}[1]{\mathbb{E}^{#1}}
\newcommand{\tens}{\mathrm}
\renewcommand{\vec}{}
\theoremstyle{plain}
\newtheorem{theorem}{Theorem}
\newtheorem{lemma}[theorem]{Lemma}
\newtheorem{corollary}[theorem]{Corollary}
\theoremstyle{definition}
\newtheorem{definition}[theorem]{Definition}
\newtheorem{example}[theorem]{Example}
\theoremstyle{remark}
\newtheorem{remark}{Remark}
\begin{document}

\title{Piecewise Flat Extrinsic Curvature}

\author{Rory Conboye
	}
\AtEndDocument{\raggedleft {\sc \ \\
Department of Mathematics and Statistics, \\ American University, \\ Washington, DC 20016} \\ \ \\ conboye@american.edu}





\date{}
\maketitle

\begin{abstract}


Discretizations of the mean curvature and extrinsic curvature components are constructed on piecewise flat simplicial manifolds, giving approximations for smooth curvature values in a mostly mesh-independent way. These constructions are given in combinatoric form in terms of the extrinsic hinge angles, the intrinsic structure of the piecewise flat manifold and a choice of dual tessellation, and can be viewed as the average of $n$-volume integrals. The constructions are also independent of the manifold dimension.


\

\noindent{Keywords:
Discrete differential geometry, Regge calculus, piecewise linear, second fundamental form, shape operator.}



\end{abstract}


\section{Introduction}


The extrinsic curvature describes the shape of a manifold due to its' embedding in some higher dimensional manifold. For example the curvature of a smooth curve in Euclidean $2$-space $\euc{2}$ is defined as the inverse of the radius of the circle that best fits the curve at each point. Piecewise flat simplicial manifolds are generalizations to higher dimensions of piecewise linear curves, and are formed by joining Euclidean $n$-simplices (line-segments, triangles, tetrahedra) along their $(n-1)$-dimensional faces. Due to this piecewise flat nature the curvature cannot be given in the same way as smooth manifolds. Instead, the shape resulting from an embedding is determined by \emph{hinge-angles} defined at the faces separating each pair of $n$-simplices. This is easily visualised for a piecewise linear curve in $\euc{2}$, with an obvious angle between each pair of line-segments.


This piecewise flat interpretation of the extrinsic curvature dates back to Steiner in the 1840's \cite{Steiner}, where the total mean extrinsic curvature of a tetrahedron is given as the sum for each edge, of $\pi$ minus the exterior dihedral angle, times the length of that edge. This approach is similar to the deficit angles around co-dimension-$2$ simplices which give the intrinsic curvature of a piecewise flat manifold, with the related total sum giving the Regge action \cite{Regge}.


Piecewise flat manifolds can provide a discrete approximation for smooth manifolds in a number of ways, and since they are geometric objects themselves, they provide many advantages over other approximation techniques. However, while hinge angles are appropriate for piecewise flat manifolds, they do not measure curvature in the same way as smooth extrinsic curvature. What they \emph{do} measure, is equivalent to the path integral of certain curvature components across each hinge. Since the choice of path can be ambiguous, dividing these integrals by a path length does not lead to a consistent definition.


In this paper the curvature is instead averaged over a specific collection of $n$-volumes. These $n$-volumes are given by a choice of dual tessellation and defined so as to intersect a collection of hinges appropriate to either the mean curvature at a vertex, or the curvature component orthogonal to a hinge. The $n$-volume integral is constructed using path integrals across each hinge, given in terms of the hinge angles, and divided by the $n$-volume to give an average value. This is an extension of an approach developed for \emph{intrinsic} piecewise flat curvature \cite{PLCurv}, which has lead to a piecewise flat Ricci flow in three dimensions.


Previous approaches for approximating smooth extrinsic curvatures have mostly been for $2$-dimesnional surfaces in $\euc{3}$. One of the more prominent of these makes use of the \emph{cotan} formula of Pinkall and Polthier \cite{PP93}, giving the integrated mean curvature over areas dual to vertices as a weighted sum of the edge vectors in $\euc{3}$, see for example Meyer et al. \cite{MeyDebSchBar03} and a discussion about convergence by Wardetzky \cite{WarDDG}. However as noted in \cite{BS07}, non-Delaunay triangulations lead to negative weightings, which can cause certain issues to arise. While methods exist to adapt a given triangulation to Delaunay for $2$-surfaces, this is more difficult for higher dimensions where Delaunay triangulations also become more restrictive \cite{BDGintrins}. Other approaches have used weighted sums of the hinge angles bounding individual triangles \cite{Grin06}, and area variations for families of parallel polyhedral meshes \cite{BobCurv10}, while methods related to Regge calculus in the physics literature mostly concentrate on single hinges \cite{HS81, Brewin, KLM89ec, TraceK}.

The main results of the paper are highlighted in theorem \ref{thm:1.1} below, giving the piecewise flat integrated curvature along a geodesic segment, the average mean curvature at a vertex, and the average extrinsic curvature component orthogonal to each hinge. While the choice of dual tessellation is left open for the remainder of the paper, for simplicity it is restricted to tessellations which intersect each edge at its' midpoint below. Such tessellations include Voronoi, barycentric or the mixture area used in \cite{MeyDebSchBar03}.

\begin{theorem}
\label{thm:1.1} Take an $n$-dimensional piecewise flat manifold $S^n$, embedded in $\euc{n+1}$.

\begin{enumerate}
\item For a geodesic segment $\gamma$ in $S^n$, intersecting a single hinge $h$ at an angle $\theta$ to the vector orthogonal to $h$ in $S^n$, the integral of the second fundamental form $\alpha$ along $\gamma$ is
\begin{equation}
\int_\gamma \alpha (\hat \gamma, \hat \gamma) \, \mathrm{d} s
= \cos \theta \ \epsilon_h + O(\epsilon_h^3) ,
\end{equation}
for small hinge angle $\epsilon_h$, with $\hat \gamma$ giving the unit tangent vector field to $\gamma$.

\item The average mean curvature over the dual $n$-volume $V_v$ at a vertex $v$ is
\begin{equation}
H_v
:= \widetilde H_{V_v}
= \frac{1}{|V_v|}
\sum_{h \subset \mathrm{star}(v)} \frac{1}{2}|h| \epsilon_h ,
\label{1.H}
\end{equation}
with $|V_v|$ and $|h|$ representing the $n$-volume measures of $V_v$ and $h$ respectively.

\item The average curvature orthogonal to $h$ over an $n$-volume $V_h$ (formed by the $n$-volumes dual to the vertices in the closure of $h$, intersecting lines orthogonal to $h$ in $S^n$), is
\begin{equation}
\alpha_h
:= \widetilde{\alpha}(\hat \gamma, \hat \gamma)_{V_h}
= \frac{1}{|V_h|} \left(
|h| \epsilon_h
+ \sum_{i} \frac{1}{2} |h_i| \cos^2 \theta_i \, \epsilon_i
\right) ,
\label{1.a}
\end{equation}
for hinges $h_i$ intersecting $V_h$, making an angle $\theta_i$ with $h$, with hinge angles $\epsilon_i$.

\end{enumerate}
\end{theorem}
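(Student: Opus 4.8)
My plan is to prove part (1) first, since the two averaged quantities in (2) and (3) are assembled from it by integrating across the hinges that each dual volume meets.

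For part (1) I would reduce everything to the $2$-plane $P\subset\euc{n+1}$ orthogonal to the hinge $h$. Because $h$ is an $(n-1)$-face shared by two flat simplices, the embedding is flat in every direction tangent to $h$ and bends only inside $P$, where the cross-section of $S^n$ is a piecewise-linear curve turning through the hinge angle $\epsilon_h$. The geodesic $\gamma$ is two straight segments in $\euc{n+1}$ meeting on $h$, so its unit tangent jumps from $\hat\gamma_1$ to $\hat\gamma_2$; since the integrand is exactly the normal part of the geodesic's acceleration, a short smoothing argument gives $\int_\gamma \alpha(\hat\gamma,\hat\gamma)\,\mathrm{d}s = \langle \hat\gamma_2-\hat\gamma_1,N\rangle$ for the bisecting normal $N$. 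Writing $\hat\gamma=\cos\theta\,u+\sin\theta\,w$ with $u\in P$ the in-surface unit vector orthogonal to $h$ and $w$ tangent to $h$, the hinge-tangent part $\sin\theta\,w$ is common to both simplices and unchanged, while $u$ rotates by $\epsilon_h$ in $P$; hence $\hat\gamma_2-\hat\gamma_1=\cos\theta\,(u_2-u_1)$ and $\langle u_2-u_1,N\rangle=2\sin(\epsilon_h/2)$, giving $\cos\theta\,\sin\epsilon_h=\cos\theta\,\epsilon_h+O(\epsilon_h^3)$, with the cubic error coming entirely from the expansion of the sine.

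For part (2) I would use that across any single hinge the shape operator has exactly one nonzero eigenvalue, in the direction $u$ orthogonal to the hinge, so the mean curvature there reduces to that one normal curvature. Foliating a neighborhood of each hinge $h\subset\mathrm{star}(v)$ by the geodesics orthogonal to $h$ (the case $\theta=0$ of part (1), each crossing contributing $\epsilon_h$) and parametrizing the foliation by its base point in $V_v\cap h$, Fubini gives the contribution of $h$ as $\epsilon_h$ times the $(n-1)$-volume of the portion of $h$ captured by the dual cell. Identifying this captured portion with the coefficient $\tfrac12|h|$ for the admissible midpoint-cutting tessellations, summing over the star, and dividing by $|V_v|$ yields \eqref{1.H}.

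Part (3) is the same mechanism applied to $V_h$, now with the foliation by the lines orthogonal to $h$ in $S^n$ and $\hat\gamma=u$. The hinge $h$ itself is met head-on, and since $V_h$ is built from the dual cells of every vertex in the closure of $h$ the whole of $h$ is captured, producing the term $|h|\epsilon_h$ with coefficient one. Each secondary hinge $h_i$ is crossed at an angle $\theta_i$, so part (1) supplies a factor $\cos\theta_i\,\epsilon_i$ while projecting $h_i$ onto the transverse hyperplane (the flux of the orthogonal foliation through $V_h\cap h_i$) supplies a second factor $\cos\theta_i$; together with the captured portion $\tfrac12|h_i|$ this gives the term $\tfrac12|h_i|\cos^2\theta_i\,\epsilon_i$, and dividing by $|V_h|$ produces \eqref{1.a}. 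The curvature content is elementary once part (1) is in hand; the real work, and the step I expect to be the main obstacle, is the geometric bookkeeping of how each dual volume slices the hinges it meets — establishing the captured-portion volumes ($V_v\cap h$, the full $h$ inside $V_h$, and $V_h\cap h_i$) that appear as the coefficients $\tfrac12|h|$, $|h|$ and $\tfrac12|h_i|$ for the chosen tessellation, and fixing the normalization of $H$ consistently across dimensions. I would also check that the $O(\epsilon_h^3)$ errors from part (1) stay uniformly controlled when summed over the finitely many hinges of each dual volume.
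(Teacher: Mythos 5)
Your proposal is correct and follows essentially the same route as the paper: part (1) by tracking the rotation of the tangent vector across the hinge split into hinge-orthogonal and hinge-tangent components (the paper phrases this via parallel transport and $\sin\frac12\psi = \cos\theta\,\sin\frac12\epsilon_h$, you via the difference vector against the bisecting normal --- equivalent to the stated $O(\epsilon_h^3)$ order), and parts (2) and (3) by the same Fubini-type foliation arguments, with only the hinge-orthogonal direction contributing to the mean curvature at each hinge and the two factors of $\cos\theta_i$ (one from the path integral, one from the cross-sectional projection) producing the $\cos^2\theta_i$ terms. The only blemish is the intermediate slip writing $\cos\theta\,\sin\epsilon_h$ where your own computation gives $2\cos\theta\,\sin(\epsilon_h/2)$, which is harmless since both equal $\cos\theta\,\epsilon_h + O(\epsilon_h^3)$.
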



The paper begins with some preliminaries in section \ref{sec:Prelim}, giving definitions for piecewise flat manifolds, hinge angles and triangulations of smooth embedded manifolds. The integral of the extrinsic curvature over hinge-intersecting paths is then found in section \ref{sec:Int}. The following two sections motivate the choice of volumes over which to compute the mean curvature and hinge-orthogonal components, and prove expressions (\ref{1.H}) and (\ref{1.a}) for these volumes. The curvature values for triangulations of a circle in $\euc{2}$ and a $2$-sphere and $2$-cylinder in $\euc{3}$ are computed as examples throughout the paper, showing the constructions to match closely with their smooth curvature values. Generalizations to embeddings in higher dimensions is discussed briefly in section \ref{sec:GenEmb}.


\section{Piecewise Flat Manifolds and Submanifolds}
\label{sec:Prelim}

\subsection{Piecewise flat manifolds}

A piecewise flat manifold $S^n$ is a differential manifold which can be decomposed into Euclidean segments, with the most simple given by $n$-simplices (line-segments, triangles, tetrahedra). The geometry of a Euclidean $n$-simplex is entirely determined by the lengths of its edges, and so a simplicial piecewise flat manifold is also entirely determined by its simplicial graph and set of edge-lengths. 

\begin{definition}[Piecewise flat manifolds]
\label{def:PL}

\

\begin{enumerate}
\item A Euclidean $k$-simplex is an open subspace of a $k$-dimensional Euclidean space $\euc{k}$, given by the interior of the convex hull formed by $k + 1$ non-colinear points. An arbitrary $k$-simplex is denoted $\sigma^k$, with its closure denote $\bar \sigma^k$.


\item A homogeneous simplicial $n$-complex is a connected graph of simplices up to dimension $n$, where each simplex is either an $n$-simplex or a face of an $n$-simplex.


\item A piecewise flat manifold $S^n$ is a homogeneous simplicial $n$-complex formed by Euclidean $n$-simplices, such that the metric for each $k$-simplex $\sigma^k$ is consistent with the metrics of all $n$-simplices containing $\sigma^k$ in their closure.


\item The $\mathrm{star}$ of a $k$-simplex $\sigma^k$ is the subspace of a piecewise flat manifold $S^n$ formed by the set of simplices $\sigma_i^m$ containing $\sigma^k$ in their closures, i.e. $\mathrm{star}(\sigma^k) = \{ \sigma_i^m | \bar \sigma_i^m \supset \sigma^k \}$. In two dimensions this is comonly referred to as a $1$-ring.
\end{enumerate}
\end{definition}

The development from one $n$-simplex to an adjacent $n$-simplex is still Euclidean, with deviations from a Euclidean manifold arising for closed paths around co-dimension-two simplices. The parallel transport of a vector around such a path deviates by an \emph{intrinsic} deficit angle associated with the co-dimension-two simplex, in the plane orthogonal to it. The intrinsic curvature of a piecewise flat manifold is characterised by these intrinsic deficit angles, which can be derived from the simplicial graph and set of edge-lengths, much like smooth curvature can be given in terms of the metric.

\subsection{Hinge angles}



Any pair of $n$-simplices joined along a common $(n-1)$-face can be linearly embedded in $\euc{n+1}$ so that each $n$-simplex is flat. The common $(n-1)$-simplex is known as a \emph{hinge}, denoted $h$, and the relative directions of the two $n$-simplices is determined by a \emph{hinge angle} $\epsilon_h$ in the plane orthogonal to $h$ in $\euc{n+1}$. There are many, mostly equivalent, methods for defining the hinge angle, see for example \cite{SulDDG,KLM89ec}. One such method uses the angle $\phi$ between the positive normal vector $\vec{n}$ to one of the $n$-simplices, from a given orientation, and the inward-directed vector $\vec{v}$ tangent to the other $n$-simplex and orthogonal to $h$. The hinge angle $\epsilon_h := \pi/2 - \phi$, and is positive when the $n$-simplices are concave from the given orientation, and negative when convex.

\begin{figure}[h]
\centering
\includegraphics[scale=1]{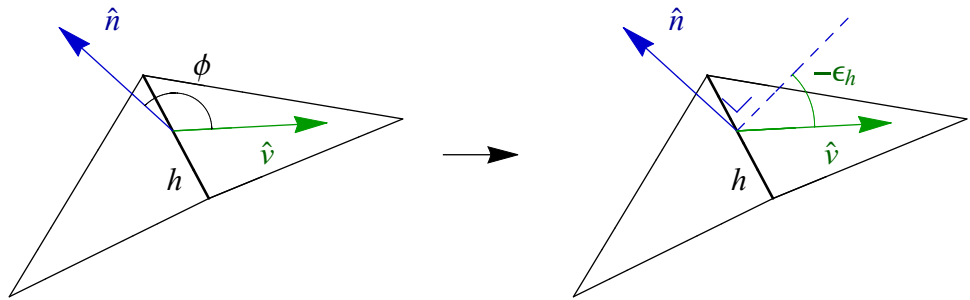}
\caption{Pair of triangles linearly embedded in $\euc{3}$, showing the hinge $h$ and hinge angle $\epsilon_h$.}
\label{fig:hinge}
\end{figure}


A piecewise flat manifold $S^n$ is considered as a piecewise flat \emph{submanifold} of $\euc{n+1}$ if it can be embedded piecewise-linearly so that each simplex is flat in $\euc{n+1}$. The hinge angles $\epsilon_h$ associated with each $(n-1)$-simplex $h$ are determined by the specific embedding of $S^n$ in $\euc{n+1}$, and the embedding itself is determined by these angles, up to Euclidean transformations.

\begin{remark}
Note that the notation for the hinges and hinge angles in this paper coincides with the notation for the co-dimension-two simplices and deficit angles respectively in \cite{PLCurv}. This is done to emphasize the similarity between the piecewise flat extrinsic curvature constructions developed here, and the \emph{intrinsic} curvature constructions in \cite{PLCurv}.
\end{remark}

\subsection{Approximating submanifolds}


A smooth submanifold $M^n \subset \euc{n+1}$ can be approximated by a piecewise flat submanifold $S^n \subset \euc{n+1}$ in a number of ways. Generally, a homogeneous simplicial complex is first defined on $M^n$, with $S^n$ then defined using the same simplicial complex with either the vertices coinciding with those on $M^n$ or the $n$-simplices tangent to $M^n$ at some point in their interior. The piecewise flat submanifold $S^n$ may then be globally rescaled in order to have an equivalent $n$-volume to $M^n$.

In order for $S^n$ to be a good approximation for $M^n$, both the deficit angles and hinge angles should be uniformly small. This can be achieved by defining the simplicial complex on $M^n$ so that the $n$-simplices are small with respect to both the intrinsic and extrinsic curvature of the manifold. This gives a high resolution where either of the curvatures are high, and a lower resolution in regions where $M^n$ is almost flat.

\


\begin{example}[Approximation of smooth manifolds] \
\label{eg:triang}

\

\begin{enumerate}
\item
A regular triangulation of a circle in $\euc{2}$ can be given by first subdividing the circle into $k$ equal length arcs. A regular $k$-sided polygon can then be defined in $\euc{2}$, with edge-lengths of size $|\ell| = 2 \pi r / k$ where $r$ is the radius of the circle. The hinge angles are based at the vertices of the polygon and are of size $\epsilon_h = - 2 \pi/k$.

\item
A regular triangulation of a $2$-sphere in $\euc{3}$ can be defined similarly, using an icosahedron formed by 20 equilateral triangles. For a total area equal to that of a $2$-sphere of radius $r$, 
the length of each edge is $|\ell| = \sqrt{\frac{4 \pi}{5 \sqrt{3}}} \, r$. The hinge angles are based at the edges with an angle of $\epsilon_h = - \arccos [\sqrt{5}/3] \simeq - 0.23228 \pi$.

\item
A cylinder in $\euc{3}$ can be approximated by starting with regular $k$-sided polygons for the cross-sectional circles, set at regular intervals of length $p$ apart, with edges denoted $\ell_a$. The corresponding vertices for neighbouring polygons can then be joined by new line-segments denoted $\ell_b$, of length $p$, forming a surface of flat rectangles. A simplicial manifold is formed by introducing diagonals to each rectangle, denoted $\ell_c$. The length and hinge angle associated with each type of edge is given in table \ref{tab:CylApprox} below.
\end{enumerate}
\end{example}

\

\begin{table}[h]
	\centering
	\begin{tabular}{|c|c|c|}
		\hline
		Edge & $|\ell|$ & $\epsilon_\ell$ \\
		\hline
		$\ell_a$ & $2 \pi r / k$ & $0$ \\
		$\ell_b$ & $p$ & $- 2 \pi / k$ \\
		$\ell_c$ & $\sqrt{|\ell_a|^2 + |\ell_b|^2}$ & 0 \\
		\hline
	\end{tabular}
	\caption{Edge-lengths and hinge angles for a piecewise flat approximation of a cylinder in $\euc{3}$.}
	\label{tab:CylApprox}
\end{table}

\

\begin{figure}[h]
	\centering
	\includegraphics[scale=1]{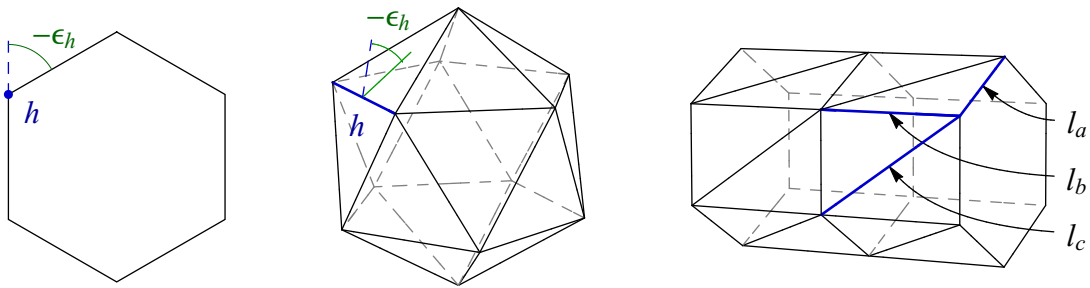}
	\caption{Triangulations of a circle in $\euc{2}$, and of a sphere and cylinder in $\euc{3}$.}
	\label{fig:EgApprox}
\end{figure}

\newpage

\section{Smooth Extrinsic Curvature and Path Integrals}
\label{sec:Int}

\subsection{Smooth extrinsic curvature}


For a smooth submanifold $M^n \subset \euc{n+1}$, the extrinsic curvature at each point $x \in M^n$ gives a measure of how the manifold near $x$ deviates from the tangent space at $x$. This curvature is given by the second fundamental form, a symmetric bilinear form, see for example \cite{KobayNomizuII}. For any pair of vector fields $\vec{u}$ and $\vec{v}$ tangent to $M^n$ in a neighbourhood $U$ of $x$, the second fundamental form is given by the normal component of the change in $\vec{u}$ in the direction of $\vec{v}$:
\begin{equation}
\alpha (\vec{u}, \vec{v})
 := \left< \nabla_{\vec{v}} \vec{u}, \hat n \right> ,
\label{alpha}
\end{equation}
where $\hat n$ is the unit normal vector field to $M^n$ over $U$, $\nabla$ is the covariant derivative associated with the flat connection on $\euc{n+1}$ and $\left< \cdot , \cdot \right>$ is the inner product on $\euc{n+1}$.


Since the normal vector field $\hat n$ is everywhere orthogonal to the vector fields $\vec{u}$ and $\vec{v}$ in $U \subset M^n$, the second fundamental form can also be viewed as the change in $\hat n$ along $\vec{v}$, taking the $\vec{u}$ component of the result:
\begin{equation}
\alpha (\vec{u}, \vec{v}) 
 \equiv - \left< \nabla_{\vec{v}} \hat n, \vec{u} \right>
 =: \left< \mathrm{Q}(\vec{v}), \vec{u} \right> ,
\end{equation}
with $\mathrm{Q}(\vec{v})$ known as the shape operator. The covariant derivative can also be given in terms of the infinitesimal parallel transport of vectors, which is trivial in Euclidean space. For an integral curve $\xi$ of the vector field $\vec{v}$,
\begin{equation}
\nabla_{\vec{v}} \vec{u} |_x 
 = \lim_{\Delta x \rightarrow 0}
 \frac{T^\xi_{(x+\Delta x) \rightarrow x} \vec{u} (x + \Delta x)
 	- \vec{u} (x)}{\Delta x} ,
\end{equation}
with $T^\xi_{x \rightarrow y} \vec{u}$ denoting the parallel transport of the vector $\vec{u}$ at $x$, along the curve $\xi$ to $y$.


All of the second fundamental form values at a point $x \in U$ can be found from those given by a set of $n$ linearly independent vector fields in $U$. These consist of $n$ values for a repeated argument, and $(n-1)/2$ for pairs of different vector fields. Over the neighbourhood $U$ these give a set of $(n+1)/2$ field components, forming a symmetric tensor field commonly referred to as the extrinsic curvature tensor. In the physics literature this tensor is denoted by $\tens{K}$, with $\tens{K}_{a b} \, \vec{u}^a \vec{v}^b := \alpha (\vec{u}, \vec{v})$, with Einstein summation assumed over repeated indices.


The second fundamental form can also be seen as a quadratic form, acting on a single vector field at a time,
\begin{equation}
\alpha(\vec{u})
 := \alpha(\vec{u}, \vec{u}) 
 \equiv \tens{K}_{a b} \, \vec{u}^a \vec{u}^b .
\end{equation}
Values of the bilinear form with mixed arguments can also be given in terms of the quadratic form, using the bilinearity and symmetry of the former. For two vector fields $\vec{u}, \vec{v}$ tangent to $M^n$ in $U \subset M^n$, the action of the bilinear form on the difference vector field $\vec{u} - \vec{v}$ can be expanded,
\begin{eqnarray}
&&\alpha(\vec{u} - \vec{v}, \vec{u} - \vec{v})
= \alpha(\vec{u}, \vec{u})
- 2 \alpha(\vec{u}, \vec{v})
+ \alpha(\vec{v}, \vec{v})
\nonumber \\ \Leftrightarrow \qquad
&&\alpha(\vec{u}, \vec{v})
= \frac{1}{2}\left(
\alpha(\vec{u})
+ \alpha(\vec{v})
- \alpha(\vec{u} - \vec{v})
\right) .
\label{SFFuv}
\end{eqnarray}
The complete second fundamental form at each point $x \in M^n$ can therefore also be given by the quadratic form values for a set of $n$ linearly independent vector fields in $U$, and for the $(n-1)/2$ vector fields given by the difference between each pair of these.

\subsection{Path integrals of curvature along geodesic segments}


Due to the nature of piecewise flat manifolds, infinitesimal application of the second fundamental form will give zero curvature within each $n$-simplex, and an infinite value at the hinges. However, since the hinge angles should be seen as integrated quantities, the path integral of the second fundamental form along smooth manifold geodesics is first investigated below.


\begin{lemma}[Smooth geodesic curvature integral]
\label{lem:PathIntM}
Take a geodesic segment $\gamma (s) : [0, 1] \rightarrow M^n \subset \euc{n+1}$, with unit tangent vectors $\hat \gamma_s \in T_{\gamma(s)} M^n$. The integral along $\gamma$ of the second fundamental form $\alpha (\hat \gamma)$, is equal to the angle between $\hat \gamma_0$ and $\hat \gamma_1$ in $\euc{n+1}$,
\begin{equation}
a_\gamma :=
\int_\gamma \alpha(\hat \gamma) \, \mathrm{d} s
 = \psi  \left(
 \hat \gamma_0, \, T^\gamma_{1 \rightarrow 0} \hat \gamma_1
 \right) .
\label{PathIntM}
\end{equation}
The term $\psi$ represents the angle between its two arguments, and is deemed positive if the normal component of the vector $T^\gamma_{1 \rightarrow 0} \hat \gamma_1 - \hat \gamma_0$ is in the positive $\hat n$ direction, and negative otherwise.
\end{lemma}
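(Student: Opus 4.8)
The plan is to exploit the defining property of a geodesic together with the codimension-one splitting of the ambient derivative. First I would reparametrize $\gamma$ by arc length, so that $\hat\gamma=\mathrm{d}\gamma/\mathrm{d}s$ is genuinely the unit tangent and $\mathrm{d}s$ is the length element, and recall that in $\euc{n+1}$ the covariant derivative $\nabla$ is just the ordinary derivative of the $\euc{n+1}$-valued map $\hat\gamma(s)$, since parallel transport there is trivial. Splitting $\nabla_{\hat\gamma}\hat\gamma$ into its components tangent and normal to $M^n$ via the Gauss formula, the tangential part is the intrinsic covariant derivative of $\hat\gamma$ along $\gamma$, which vanishes because $\gamma$ is a geodesic, while the normal part is $\langle\nabla_{\hat\gamma}\hat\gamma,\hat n\rangle\,\hat n=\alpha(\hat\gamma)\,\hat n$ by the definition (\ref{alpha}) of $\alpha$. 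This yields the key identity $\mathrm{d}\hat\gamma/\mathrm{d}s=\alpha(\hat\gamma)\,\hat n$, which says that along a geodesic the unit tangent turns only in the plane it spans with the unit normal, at instantaneous rate $\alpha(\hat\gamma)$.

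Next I would promote this infinitesimal identity to a statement about the finite angle by tracking the orthonormal pair $(\hat\gamma,\hat n)$ along $\gamma$. The companion Weingarten equation $\mathrm{d}\hat n/\mathrm{d}s=\nabla_{\hat\gamma}\hat n=-\mathrm{Q}(\hat\gamma)$ has $\hat\gamma$-component $-\langle\mathrm{Q}(\hat\gamma),\hat\gamma\rangle\,\hat\gamma=-\alpha(\hat\gamma)\,\hat\gamma$, so if the pair rotates rigidly in the fixed $2$-plane it spans, then integrating the common angular rate $\alpha(\hat\gamma)$ gives a total turning of $\int_\gamma\alpha(\hat\gamma)\,\mathrm{d}s$. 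Because parallel transport in $\euc{n+1}$ is trivial, $T^\gamma_{1\to 0}\hat\gamma_1$ is simply $\hat\gamma_1$ regarded at $\gamma(0)$, so this turning is precisely the angle $\psi(\hat\gamma_0,\hat\gamma_1)$ between the initial and final tangents. For the sign I would integrate the identity to get $\hat\gamma_1-\hat\gamma_0=\int_\gamma\alpha(\hat\gamma)\,\hat n\,\mathrm{d}s$, whose value lies in the $+\hat n$ direction exactly when $\int_\gamma\alpha(\hat\gamma)\,\mathrm{d}s>0$, matching the stated convention for $\psi$.

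The main obstacle is exactly this closure step: $\mathrm{Q}(\hat\gamma)$ need not be parallel to $\hat\gamma$, so $\mathrm{d}\hat n/\mathrm{d}s$ generally carries an extra tangential component orthogonal to $\hat\gamma$, and then the tangent indicatrix $\hat\gamma(s)$ need not trace a great-circle arc on the unit sphere. I would address this in two complementary ways. When $\hat\gamma$ is a principal direction of the shape operator the extra component vanishes and the rotation is genuinely planar, giving the stated equality exactly; this is precisely the situation in the piecewise flat application, where $\gamma$ is straight inside each flat simplex and its only turning is a single planar fold in the $2$-plane orthogonal to a hinge. In the general smooth case I would establish the equality to leading order, using $\hat\gamma_1-\hat\gamma_0=\int_\gamma\alpha(\hat\gamma)\,\hat n\,\mathrm{d}s$ together with $\psi=|\hat\gamma_1-\hat\gamma_0|+O(\psi^3)$ and the near-constancy of $\hat n$ along a short segment, which is the accuracy actually needed for the $O(\epsilon_h^3)$ estimates of Theorem \ref{thm:1.1}.
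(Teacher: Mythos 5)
Your proposal is correct, and its core --- using the geodesic property to reduce $\nabla_{\hat\gamma}\hat\gamma$ to its normal component $\alpha(\hat\gamma)\,\hat n$, then integrating the resulting turning rate of the tangent vector --- is exactly the paper's argument. Where you genuinely diverge is at the final step, and there your version is the more careful one. The paper writes the numerator of the difference quotient as $\mathrm{d}\psi\,\hat n$ and then simply asserts $\int_\gamma \mathrm{d}\psi = \psi\left(\hat\gamma_0, T^\gamma_{1\rightarrow 0}\hat\gamma_1\right)$, i.e.\ it takes for granted that accumulated infinitesimal rotation angles sum to the net angle between the endpoint tangents. This is precisely the closure step you isolate: it holds only when the turning stays in a fixed $2$-plane (equivalently, the tangent indicatrix is a monotonically traversed great-circle arc), and it can fail exactly in general --- for a geodesic helix on a smooth cylinder, $\int_\gamma \alpha(\hat\gamma)\,\mathrm{d}s$ equals in magnitude the length of a small-circle arc on the unit sphere, which strictly exceeds the angle $\psi$ between that arc's endpoints. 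Your two-case resolution --- exact equality when the rotation is planar, which covers the single-fold situation actually invoked for piecewise flat manifolds in Theorem \ref{thm:PathIntS}, and equality up to third-order corrections in general, obtained from $\hat\gamma_1 - \hat\gamma_0 = \int_\gamma \alpha(\hat\gamma)\,\hat n\,\mathrm{d}s$ together with the chord--angle comparison and near-constancy of $\hat n$ --- supplies the justification the paper's proof omits, and matches the $O(\epsilon_h^3)$ accuracy claimed in the downstream results. In short: same route, but you identified and repaired a real gap in the paper's own final step; the lemma's exact equality requires your planarity hypothesis, and otherwise should be read as holding to leading order.
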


\begin{proof}
From the definition of the second fundamental form in (\ref{alpha}),
\begin{equation}
\int_\gamma \alpha(\hat \gamma) \, \mathrm{d} s
= \int_\gamma \left<
 \nabla_{\hat \gamma} \hat \gamma, \, \hat n \right> \, \mathrm{d} s .
\end{equation}
The vector field $\nabla_{\hat \gamma} \hat \gamma$ must be parallel to $\hat n$ at each point of $\gamma$, since it is a geodesic curve in $M^n$ with $\nabla^{M}_{\hat \gamma} \hat \gamma = 0$, where $\nabla^{M}$ is the restriction of $\nabla$ to $M^n$. This derivative can also be given in terms of the parallel transport along $\gamma$, with
\begin{equation}
\nabla_{\hat \gamma} \hat \gamma \, |_{\gamma(s)}
 = \lim_{\Delta s \rightarrow 0} \frac{
   T^\gamma_{(s+\Delta s) \rightarrow s} \hat \gamma_{(s + \Delta s)}
   - \hat \gamma_s
   }{\Delta s} .
\end{equation}
In the limit above, the vector in the numerator can be denoted $d \psi \, \hat n$, where $d \psi$ is the infinitesimal angle of rotation between  $\hat \gamma_s$ and the parallel transport of $\hat \gamma_{(s+d s)}$ back to the point $\gamma(s)$. This infinitesimal angle will be positive if the difference is in the positive $\hat n$ direction, and negative otherwise. The integral of the curvature along $\gamma$ therefore becomes
\begin{equation}
\int_\gamma \alpha(\hat \gamma) \, \mathrm{d} s
 = \int_\gamma \left<
   \nabla_{\hat \gamma} \hat \gamma, \, \hat n \right> \, \mathrm{d} s
 = \int_\gamma \left<
   \frac{d \psi}{d s} \, \hat n, \, \hat n \right> \, \mathrm{d} s
 = \int_\gamma \mathrm{d} \psi
 = \psi  \left(
 \hat \gamma_0, \, T^\gamma_{1 \rightarrow 0} \hat \gamma_1
 \right) ,
\end{equation}
with the finite angle $\psi$ being positive if the normal component of $T^\gamma_{1 \rightarrow 0} \hat \gamma_1 - \hat \gamma_0$ is in the positive $\hat n$ direction, and negative otherwise.
\end{proof}


Applying the result of the lemma above to a hinge $h$ in a piecewise flat manifold $S^n \subset \euc{n+1}$ can easily be seen to give the hinge angle $\epsilon_h$ as the integratal of the curvature along a path intersecting $h$ orthogonally. The right hand side of (\ref{PathIntM}) can even be used to define the hinge angles. More interestingly, lemma \ref{lem:PathIntM} can also be used to define the integrated curvature along paths which are \emph{not} orthogonal to a hinge $h$ in terms of the hinge angle $\epsilon_h$.


\begin{theorem}[Piecewise flat geodesic curvature integral]
\label{thm:PathIntS}
For a geodesic segment $\gamma(s): [0,1] \rightarrow S^n \subset \euc{n+1}$, intersecting a single hinge $h$ at an angle $\theta$ to the vector orthogonal to $h$, the integrated curvature along $\gamma$ is
\begin{equation}
a_h^\theta :=
a_\gamma 
 = \cos \theta \ \epsilon_h + O(\epsilon_h^3) ,
\end{equation}
for a small hinge angle $\epsilon_h$, with the higher order terms vanishing for $\theta \in \{0, \pm \pi/2\}$.
\end{theorem}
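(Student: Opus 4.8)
The plan is to apply Lemma \ref{lem:PathIntM} directly and then evaluate the resulting ambient angle $\psi$ by a short computation in the $2$-plane orthogonal to the hinge. Since the connection is the flat connection on $\euc{n+1}$, parallel transport is trivial, so $T^\gamma_{1\to 0}\hat\gamma_1$ is simply $\hat\gamma_1$ regarded as a vector at $\gamma(0)$, and (\ref{PathIntM}) reduces $a_\gamma$ to the signed angle in $\euc{n+1}$ between the incoming tangent $\hat\gamma_0$ (lying in the first $n$-simplex) and the outgoing tangent $\hat\gamma_1$ (lying in the second). Within each flat $n$-simplex the geodesic is a straight line and $\hat\gamma$ is an ambient-constant vector, so the only change occurs at $h$; the whole problem is therefore to express $\hat\gamma_1$ in terms of $\hat\gamma_0$, $\theta$ and $\epsilon_h$.

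First I would fix an orthonormal frame adapted to the hinge: let $e_1,\dots,e_{n-1}$ span the $(n-1)$-plane of $h$, and let $e_n,e_{n+1}$ span its orthogonal complement in $\euc{n+1}$, the $2$-plane in which the hinge bend takes place. I can arrange that the first $n$-simplex has tangent space $\mathrm{span}(e_1,\dots,e_n)$, with $e_n$ its unit orthogonal-to-$h$ direction along the crossing and $e_{n+1}$ its positive normal $\hat n$. By the hinge-angle construction in Section~\ref{sec:Prelim}, the second $n$-simplex is obtained by rotating $e_n$ through the angle $\epsilon_h$ in the $(e_n,e_{n+1})$-plane, so its orthogonal-to-$h$ direction is $w=\cos\epsilon_h\,e_n+\sin\epsilon_h\,e_{n+1}$, while the hinge directions $e_1,\dots,e_{n-1}$ are shared by both simplices.

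Next I resolve the tangent vector. Because $\theta$ is the angle between $\hat\gamma$ and the orthogonal-to-$h$ direction \emph{in} $S^n$, I can write $\hat\gamma_0=\cos\theta\,e_n+\sin\theta\,u$ with $u$ a unit vector in $\mathrm{span}(e_1,\dots,e_{n-1})$. The key geometric step is that across the hinge the geodesic unfolds to a straight line, so its component along the shared hinge directions and its speed are preserved; refolding by $\epsilon_h$ then simply replaces $e_n$ by $w$, giving $\hat\gamma_1=\cos\theta\,w+\sin\theta\,u$. Taking the inner product, and using that $u$ is orthogonal to $e_n,e_{n+1}$, yields $\cos\psi=\langle\hat\gamma_0,\hat\gamma_1\rangle=\sin^2\theta+\cos^2\theta\,\cos\epsilon_h=1-\cos^2\theta\,(1-\cos\epsilon_h)$, and inspecting the $e_{n+1}$-component of $\hat\gamma_1-\hat\gamma_0$, namely $\cos\theta\,\sin\epsilon_h$, fixes the sign so that $\psi$ carries the sign of $\cos\theta\,\epsilon_h$ under the convention of Lemma \ref{lem:PathIntM}.

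Finally I would Taylor expand. Writing $1-\cos\epsilon_h=\tfrac12\epsilon_h^2+O(\epsilon_h^4)$ and inverting the cosine gives $\psi=|\cos\theta|\,|\epsilon_h|+O(\epsilon_h^3)$, that is $a_\gamma=\cos\theta\,\epsilon_h+O(\epsilon_h^3)$; at $\theta=0$ the relation is exact, since $\cos\psi=\cos\epsilon_h$ forces $\psi=\epsilon_h$, and at $\theta=\pm\pi/2$ it is trivially exact, since $\cos\psi=1$ forces $\psi=0$, which accounts for the stated vanishing of the higher-order terms at these values. I expect the main obstacle to be the rigorous justification of the unfolding step, namely that the geodesic's hinge-tangential velocity is genuinely preserved across $h$ and that the refolding acts as a pure rotation in the $(e_n,e_{n+1})$-plane fixing $\mathrm{span}(e_1,\dots,e_{n-1})$; once this is secured, the remaining argument is the elementary inner-product expansion above.
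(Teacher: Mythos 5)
Your proposal is correct and takes essentially the same route as the paper: apply Lemma~\ref{lem:PathIntM}, describe the hinge crossing as a rotation by $\epsilon_h$ of the hinge-orthogonal component of the tangent vector (the unfolding argument), and expand for small $\epsilon_h$, checking exactness at $\theta \in \{0, \pm\pi/2\}$. Your inner-product relation $\cos\psi = 1 - \cos^2\theta\,(1-\cos\epsilon_h)$ is, via the half-angle formula, precisely the identity $\sin\frac{1}{2}\psi = \cos\theta\,\sin\frac{1}{2}\epsilon_h$ that the paper reads off from its figure, so the two computations coincide (yours being the more self-contained derivation of that step).
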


\begin{proof}
A geodesic path in $S^n$ will be a straight line in each $n$-simplex that it intersects. The parallel transport of the vector $\hat \gamma_1$, tangent to $\gamma$ at the point $\gamma (1)$, to the point $\gamma (0)$ will have its component orthogonal to $h$ rotated by the hinge angle $\epsilon_h$, with the remaining components unchanged. From figure \ref{fig:PFpath} it is clear that
\begin{equation}
\sin \frac{1}{2} \psi(\hat \gamma_0, \, T^\gamma_{1 \rightarrow 0} \hat \gamma_1)
 = \cos \theta \, \sin \frac{1}{2} \epsilon_h .
\end{equation}
For $\cos \theta \in \{0, 1\}$ the $\sin$'s can be removed from both sides of the equation, otherwise for small hinge angle $\epsilon_h$ the equation reduces to $\psi(\hat \gamma_0, \, T^\gamma_{1 \rightarrow 0} \hat \gamma_1)
= \cos \theta \, \epsilon_h + O(\epsilon_h^3)$.
\end{proof}

\begin{figure}[h]
\centering
\includegraphics[scale=1]{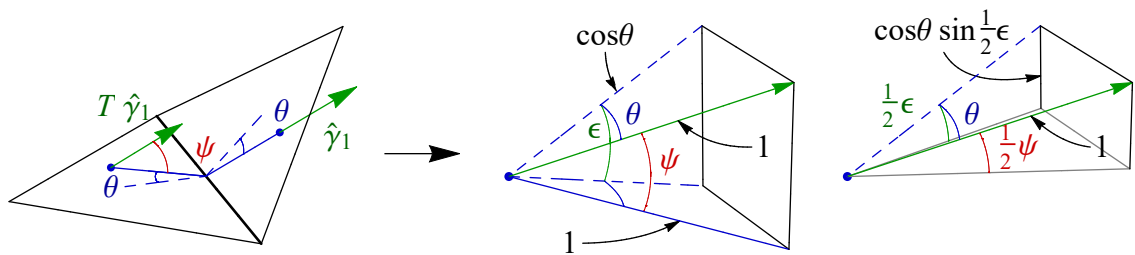}
\caption{The star of a hinge $h$ is shown on the left, with a geodesic curve $\gamma$ and the parallel transport of $\hat \gamma_1$ to $\gamma(0)$. The vectors $\hat \gamma_0$ and $T^\gamma_{1 \rightarrow 0} \hat \gamma_1$ are compared on the right.}
\label{fig:PFpath}
\end{figure}


Since $\lim_{|\gamma| \rightarrow 0} a_\gamma$ gives the infinitesimal curvature $\alpha(\hat \gamma)$ in a smooth manifold, the piecewise flat integrated curvature $a_h^\theta$ can be seen as a discretization with the length of $\gamma$ becoming small with respect to the lattice rather than vanishing. In this sense, small corresponds to intersecting one and only one hinge, though this will be shown to be \emph{too} small in the following sections. Instead these geodesic integrals will be used to give volume integrals over regions intersecting specific collections of hinges.

\section{Mean Curvature}
\label{sec:H}


The mean curvature of a smooth manifold $M^n \subset \euc{n+1}$ is a scalar field on $M^n$, given by the trace of the second fundamental form. For an orthonormal set of vector fields $\{ \vec{e_1}, ... , \vec{e_n} \}$ in a neighbourhood $U \subset M^n$, the mean curvature field over $U$ is
\begin{equation}
H
 := \sum_i^n \alpha (\vec{e_i}, \vec{e_i}) 
  = \sum_i^n \alpha (\vec{e_i}).
\label{Hsmooth}
\end{equation}
This can be used with theorem \ref{thm:PathIntS} to compute the integral of the mean curvature over certain $n$-dimensional subregions of a piecewise flat manifold $S^n$.


\begin{lemma}[Integral of mean curvature over subregion of a hinge star]
\label{lem:IntKh}
Over any $n$-dimensional region $D \subset \mathrm{star}(h) \subset S^n$, the integral of the mean curvature is
\begin{equation}
\int_D H \, \mathrm{d} V^n
= |h_D| \epsilon_h ,
\end{equation}
where $|h_D|$ represents the $(n-1)$-volume of $h \cap D$.
\end{lemma}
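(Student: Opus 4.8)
The plan is to reduce the $n$-dimensional integral of $H$ to a family of one-dimensional geodesic integrals to which theorem~\ref{thm:PathIntS} applies, exploiting the fact that the curvature of $S^n$ is concentrated entirely on the hinge $h$ and vanishes in the flat interiors. First I would fix a frame adapted to $h$: over $\mathrm{star}(h)$ choose an orthonormal set $\{ e_1, \ldots, e_{n-1}, e_n \}$ with $e_1, \ldots, e_{n-1}$ tangent to $h$ and $e_n$ the unit vector orthogonal to $h$ in $S^n$, so that from (\ref{Hsmooth}) we may write $H = \sum_{i=1}^{n-1} \alpha(e_i, e_i) + \alpha(e_n, e_n)$. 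The key observation is that only the last term survives as a distribution. Writing $\alpha(e_i, e_i) = -\left< \nabla_{e_i} \hat n, e_i \right>$ as in the shape-operator form of (\ref{alpha}), the unit normal $\hat n$ is constant in the interior of each $n$-simplex, and its only jump — across $h$ — is a rotation in the plane orthogonal to $h$. Differentiating $\hat n$ along a direction $e_i$ tangent to $h$ therefore produces nothing, so $\alpha(e_i, e_i) = 0$ for $i < n$, even in the distributional sense at the hinge. Hence $H$ collapses to the single component $\alpha(e_n, e_n)$ orthogonal to $h$.

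Next I would foliate $D$ by the geodesic segments $\gamma_p$ orthogonal to $h$, one through each point $p \in h$. Each such $\gamma_p$ is a straight line inside every simplex it meets, with unit tangent $\hat \gamma_p = e_n$, and it crosses $h$ orthogonally, so $\theta = 0$. Using arc length $s$ along $\gamma_p$ together with the $(n-1)$-volume on $h$ gives a factorization $\mathrm{d} V^n = \mathrm{d} V^{n-1} \, \mathrm{d} s$ with unit Jacobian in each flat piece. Applying Fubini and then theorem~\ref{thm:PathIntS} with $\theta = 0$, for which there are no higher-order corrections, yields
\begin{equation}
\int_{\gamma_p} H \, \mathrm{d} s = \int_{\gamma_p} \alpha(\hat \gamma_p) \, \mathrm{d} s = \epsilon_h
\end{equation}
for every fiber crossing $h$ inside $D$. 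Because $H$ is supported on $h$, the only contribution from $\gamma_p$ comes at its crossing point $p$, which lies in $D$ precisely when $p \in h \cap D = h_D$; integrating the fiber values over $h_D$ then gives $\int_D H \, \mathrm{d} V^n = \epsilon_h \, |h_D|$.

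The main obstacle I expect is the distributional bookkeeping rather than any hard estimate. The two points needing care are: making precise that the trace $H$ collapses to the single orthogonal component so that no tangential bending is spuriously counted, and justifying that for an \emph{arbitrary} region $D$ the concentration of $H$ on $h$ lets one replace the fiberwise integral by its value $\epsilon_h$ at the hinge crossing. The discrepancy in the latter is confined to the measure-zero set of fibers meeting $\partial D$, so it does not affect the integral. Once the reduction to theorem~\ref{thm:PathIntS} with $\theta = 0$ is in place, the remaining computation is immediate.
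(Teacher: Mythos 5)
Your proposal is correct and follows essentially the same route as the paper: an orthonormal frame adapted to $h$ so that only the hinge-orthogonal component of the trace contributes, a Fubini-type factorization of $D$ into geodesic fibers crossing $h_D$ orthogonally, and theorem~\ref{thm:PathIntS} with $\theta = 0$ giving exactly $\epsilon_h$ per fiber. The only cosmetic difference is that you justify the vanishing of the tangential terms by a distributional argument on $\nabla \hat n$, whereas the paper simply notes that the integral curves of the frame vectors parallel to $h$ never intersect the hinge; both observations are equivalent here.
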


\

\begin{proof}
By the definition of the mean curvature, the integral above can be given in terms of the second fundamental form for an orthonormal set of vector fields $\{ \vec{e_1}, ... , \vec{e_n} \}$ in $D$,
\begin{equation}
\int_D H \, \mathrm{d} V^n
 = \int_D \left[ \sum_i^n \alpha(\vec{e_i}) \right] 
  \mathrm{d} V^n
\end{equation}
Since the star of each hinge is intrinsically Euclidean, these vector fields can be chosen to represent a Cartesian coordinate frame, with their integral curves given as geodesic lines within $D \subset S^n$. The integral and sum therefore commute and can be swapped, with the integrals for each value of $i$ above then given in terms of the geodesic curvature integrals,
\begin{equation}
\int_D \left[ \sum_i^n \alpha(\vec{e_i}) \right] 
\mathrm{d} V^n
 = \sum_i^n \left[ \int_D \alpha(\vec{e_i}) \, \mathrm{d} V^n \right]
 = \sum_i^n \left[
   \int_{h_D} a^{\theta_i}_h \, \mathrm{d} V^{n-1} \right] ,
\label{Proof:IntKh2}
\end{equation}
with only the lines intersecting $h_D$ having non-zero integrated curvature. Choosing the vector field $\vec{e_1}$ to be orthogonal to the hinge $h$, the integral curves of the remainder of the vector fields will not intersect $h$. The sum in (\ref{Proof:IntKh2}) can therefore be reduced to a single term, with
\begin{equation}
\int_D H \, \mathrm{d} V^n
 = \int_{h_D} a^{\theta_1 = 0}_h \, \mathrm{d} V^{n-1}
 = \epsilon_h \int_{h_D} \mathrm{d} V^{n-1}
 = |h_D| \epsilon_h ,
\end{equation}
with the hinge angle $\epsilon_h$ invariant over $h$.
\end{proof}

\


This result matches some earlier definitions in the literature \cite{HS81,TraceK}, associating an integrated mean curvature of $|h| \epsilon_h$ with each hinge. Such a definition also leads naturally to Steiner's total mean curvature $\sum_h |h| \epsilon_h$ from \cite{Steiner}. However, unlike the smooth case, hinges have a directionality associated with them for $n > 1$, specifically the direction orthogonal to each hinge in $S^n$. Taking the cylinder in example \ref{eg:triang}, each type of hinge will have a different value of $|h| \epsilon_h$, while a smooth cylinder has the same mean curvature value everywhere.

The mean curvature at each point of a smooth manifold $M^n \subset \euc{n+1}$ can be seen as an average of the second fundamental form over \emph{all} directions in $M^n$. Since regions enclosing single vertices give the most general collection of hinge orientations in a piecewise flat manifold $S^n$, the piecewise flat mean curvature will be given here by integrating over the $n$-dimensional regions of a dual tessellation of $S^n$, similar to \cite{Taub95,MeyDebSchBar03}

\newpage


\begin{definition}[Vertex regions]
\label{def:V_v}
A decomposition of a piecewise flat manifold $S^n$, into $n$-dimensional regions $V_v$ dual to each vertex $v$, is defined so that:
\begin{enumerate}
	\item The vertex $v \in V_v$, but no other vertices are contained within $V_v$.
	
	\item The regions $V_v$ form a complete tessellation of $S^n$,
	\begin{equation}
	|S^n| = \sum_{v \in S^n} |V_v|, \qquad
	V_{v_i} \cap V_{v_j} = \emptyset \quad
	\forall \ i \neq j .
	\end{equation}
	with $|S^n|$ and $|V_v|$ representing the $n$-volumes of $S^n$ and $V_v$ respectively.
	
	\item Only hinges $h_v$ in the star of $v$ intersect $V_v$.
\end{enumerate}
\end{definition}

This gives the most general definition of a dual tessellation for the constructions that follow. The third property is not strictly necessary but is deemed a reasonable requirement, with Voronoi tessellations satisfying this condition only where $S^n$ gives a Delaunay triangulation for example. It also seems reasonable to require an additional property which distributes the total $n$-volume of $S^n$ in some consistent manner over the subregions $V_v$. However there are many different methods for doing this, most of which are incompatible with one another, such as the Voronoi and barycentric tessellations. Such a property will therefore not be imposed here.


\begin{theorem}[Piecewise flat mean curvature]
\label{thm:H_v}
The average mean curvature over a dual $n$-volume $V_n$ is
\begin{equation}
\label{H_v}
H_v
 := \widetilde H_{V_v}
 = \frac{1}{|V_v|}
 \sum_{h \subset \mathrm{star}(v)} |h_{|V_v}| \epsilon_h ,
\end{equation}
with $h_{|V_v} = h \cap V_v$, and $|V_v|$ representing the $n$-volume of $V_v$.
\end{theorem}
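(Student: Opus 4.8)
The plan is to realize the average mean curvature as a genuine integral average, $\widetilde H_{V_v} := |V_v|^{-1} \int_{V_v} H \, \mathrm{d} V^n$, and then to evaluate the numerator $\int_{V_v} H \, \mathrm{d} V^n$ by reducing it to a sum of single-hinge contributions, each of which is supplied by Lemma \ref{lem:IntKh}. The guiding observation is that the piecewise flat mean curvature vanishes in the interior of every $n$-simplex and is entirely concentrated on the hinges, so the integral over $V_v$ localizes to thin neighborhoods of the portions of hinges lying inside $V_v$.

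First I would invoke property (3) of Definition \ref{def:V_v}: only hinges $h \subset \mathrm{star}(v)$ meet $V_v$, so these are the only hinges that can contribute. For each such hinge I would choose a thin region $D_h \subset \mathrm{star}(h)$ whose interior contains $h \cap V_v = h_{|V_v}$. Since the open star of an $(n-1)$-simplex contains no hinge other than $h$ itself, the hypothesis of Lemma \ref{lem:IntKh} is met and gives $\int_{D_h} H \, \mathrm{d} V^n = |h_{|V_v}| \, \epsilon_h$. Away from the hinges the integrand is zero, so collecting these contributions yields $\int_{V_v} H \, \mathrm{d} V^n = \sum_{h \subset \mathrm{star}(v)} |h_{|V_v}| \, \epsilon_h$, and dividing by $|V_v|$ produces the stated formula.

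The step demanding the most care is this localization and summation argument, where I must be sure that no curvature is lost, and none is double counted, at the loci where several hinges of $\mathrm{star}(v)$ meet. Those loci are the codimension-two simplices incident to $v$, which carry zero $(n-1)$-volume and hence do not affect any of the measures $|h_{|V_v}|$ appearing above; consequently the overlaps of the neighborhoods $D_h$ contribute nothing to the sum. Equivalently, one may phrase the entire computation by treating the distributional mean curvature as a measure supported on the hinges, assigning to each $h$ the density $\epsilon_h$ with respect to $(n-1)$-volume, in which case Lemma \ref{lem:IntKh} is exactly the statement that integrating this measure over $V_v$ selects $\sum_h |h_{|V_v}| \epsilon_h$. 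The obstacle is thus bookkeeping rather than estimation: one needs only to justify the hinge-supported-measure picture and to confirm that each neighborhood used genuinely satisfies the containment $D_h \subset \mathrm{star}(h)$ required by Lemma \ref{lem:IntKh}.
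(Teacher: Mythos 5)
Your proposal is correct and follows essentially the same route as the paper: the paper likewise decomposes $V_v$ into subregions $D_h$, each containing $h \cap V_v$ and no part of any other hinge, applies Lemma \ref{lem:IntKh} to each, sums over the hinges in $\mathrm{star}(v)$, and divides by $|V_v|$. Your extra care about overlaps at codimension-two loci and the hinge-supported-measure picture is a slightly more explicit justification of the same localization step, not a different argument.
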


\begin{proof}
The region $V_v$ can be decomposed into subregions $D_h$, each enclosing the intersection of the hinge $h$ with $V_v$, and no part of any other hinge. The integral of the mean curvature over each subregion $D_h$ is given by lemma \ref{lem:IntKh}, with the total integral over $V_v$ then given by the sum of these,
\begin{equation}
\int_{V_v} H \ \mathrm{d} V^n
= \sum_{h \subset \mathrm{star}(v)}
\int_{D_h} H \ \mathrm{d} V^n
= \sum_{h \subset \mathrm{star}(v)} |h_{|V_v}| \epsilon_h .
\end{equation}
The average curvature is then found by dividing the integral by the volume of $V_v$.
\end{proof}

\begin{remark}
Instead of the hinge angles, in \cite{Taub95} and \cite{MeyDebSchBar03} the mean curvature comes from the curvature along the edges, essentially given by comparing the tangent vector of the edge with a normal vector constructed at the vertex. These curvatures can be distributed over triangular segments of a Voronoi region $V_v$ associated with each edge. The segments, known as \emph{hybrid cells} in \cite{MMR,TraceK}, are formed by the given vertex and the circumcenters of the triangles on either side of each edge. Integrating over this distribution of curvatures gives precisely the \emph{cotan} formula of \cite{PP93,MeyDebSchBar03, WarDDG, BS07}. This can also be extended to higher dimensions, where the curvature associated with each edge can be computed in the same way, and the volume of these circumcentric hybrid cells is also easily found.
\end{remark}


Unlike other vertex-based mean curvatures, the piecewise flat mean curvature given in theorem \ref{thm:H_v} above still gives the same total mean curvature expression as that of Steiner \cite{Steiner}.

\begin{corollary}[Total mean curvature]
\label{cor:IntH}
The total mean curvature over a piecewise flat manifold $S^n$ is
\begin{equation}
\int_{S^n} H_v \ \mathrm{d} V^n
 = \sum_{h \subset S^n} |h| \epsilon_h .
\label{TotalH}
\end{equation}
\end{corollary}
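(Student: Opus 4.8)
The plan is to integrate the piecewise-constant mean curvature field directly and then reorganize the resulting combinatorial sum. Since $H_v$ is a single averaged value assigned to the whole region $V_v$, the field is constant on each $V_v$, so by property 2 of Definition \ref{def:V_v} (the regions $V_v$ tessellate $S^n$) the integral splits across the dual tessellation as $\int_{S^n} H_v \, \mathrm{d} V^n = \sum_v H_v |V_v|$. I would then substitute the expression for $H_v$ from Theorem \ref{thm:H_v}, at which point the factor $|V_v|$ cancels the $1/|V_v|$ prefactor, leaving the double sum $\sum_v \sum_{h \subset \mathrm{star}(v)} |h_{|V_v}| \epsilon_h$.

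The central step is to interchange the order of summation. Summing over hinges first, I would note that $h \subset \mathrm{star}(v)$ holds precisely when $v$ is a vertex in the closure $\bar h$, so the double sum becomes $\sum_{h \subset S^n} \epsilon_h \sum_{v \in \bar h} |h_{|V_v}|$, using that $\epsilon_h$ is constant along $h$ (as already exploited in Lemma \ref{lem:IntKh}) to pull it out of the inner sum.

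The key geometric input is then that the restriction of the dual tessellation to each hinge partitions $h$. Property 3 of Definition \ref{def:V_v} guarantees that the only regions meeting $h$ are those $V_v$ with $h \subset \mathrm{star}(v)$, i.e.\ with $v \in \bar h$, while property 2 ensures these pieces are mutually disjoint and exhaust $h$. Hence $\sum_{v \in \bar h} |h_{|V_v}| = |h|$, the inner sum collapses to $|h|$, and the whole expression reduces to $\sum_{h \subset S^n} |h| \epsilon_h$, as required.

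I expect the only real obstacle to be bookkeeping rather than substance: one must check that every part of each hinge is counted exactly once under the reindexing, and that no region outside $\mathrm{star}(v)$ contributes. This is exactly what properties 2 and 3 of the vertex-region definition secure, so once the summation order is swapped the identity follows immediately. No convergence or limiting arguments are needed, since the sums are finite over the simplices of $S^n$.
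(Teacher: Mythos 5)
Your proposal is correct and follows essentially the same route as the paper's proof: integrate over the dual tessellation, use that $\epsilon_h$ is constant along each hinge, and use that the regions $V_v$ partition each hinge so its pieces sum to $|h|$. The only difference is presentational -- you make the interchange of summation explicit, whereas the paper states the reorganization by hinges directly.
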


\begin{proof}
The integral of $H_v$ over $S^n$ is equal to the sum of the integrals for each vertex volume $V_v$. Each part of a given hinge $h$ must be contained in a single volume $V_v$, since the volumes $V_v$ form a tessellation of $S^n$, and since the deficit angles $\epsilon_h$ are fixed over $h$, the contribution from each hinge will be $|h| \epsilon_h$. The total integral is then given by the sum of these terms for all hinges $h \subset S^n$.
\end{proof}

\begin{example}[Piecewise flat mean curvatures]
\label{eg:mean}

\

\begin{enumerate}
	\item For a piecewise linear curve in $\euc{2}$ there is an unambiguous vertex region given by half of each edge in the star of each vertex. For the $k$-sided polygon approximation of a circle, the mean curvature at each vertex is
	\begin{equation}
	H_v^{\mathrm{poly}}
	 = \frac{\epsilon_v}{|\ell|}
	 = \frac{- 2 \pi / k}{2 \pi r / k}
	 = - \frac{1}{r} ,
	\end{equation}
	which is exactly equal to the smooth curvature of a circle in $\euc{2}$.
	
	\item Due to the high level of symmetry in the icosahedron approximation of a $2$-sphere in $\euc{3}$, there is also an unambiguous choice of vertex region $V_v$. This region is formed by the convex hull of the symmetric centres of the equilateral triangles in the star of the vertex. With 12 vertices in an icosahedron, the area of each vertex region $|V_v| = 4 \pi r^2 / 12 = \pi r^2 / 3$, with the mean curvature then given by
	\begin{equation}
	H_v^{\mathrm{icos}}
	 = \frac{5(\frac{1}{2} \, |\ell| \, \epsilon_\ell)}{|V_v|}
	 = \frac{15}{2 \pi r} \, \sqrt{\frac{4 \pi}{5 \sqrt{3}}} \,
	   \left(- \arccos\left[\frac{\sqrt{5}}{3}\right] \right)
	 \simeq - \frac{2.09851}{r} ,
	\end{equation}
	which gives a good approximation for $H = - 2/r$, the mean curvature of a smooth $2$-sphere in $\euc{3}$.
	
	\item The most regular vertex regions in the piecewise flat approximation for the cylinder are given by intrinsically rectangular regions with the vertex at the centre, and sides parallel with the edges $\ell_a$ and $\ell_b$. These give the Voronoi regions around each vertex, and an area of $|V_v| = |\ell_a| \times |\ell_b| = 2 \pi r p / k$. For a vertex region intersecting $q$ diagonal edges $\ell_c$ (which can range from $0$ to $4$), the mean curvature is
	\begin{equation}
	H_v^{\mathrm{cyl}}
	 = \frac{2 \, \frac{1}{2} |\ell_a| \epsilon_a
	 	   + 2 \, \frac{1}{2} |\ell_b| \epsilon_b 
	 	   + q \, \frac{1}{2} |\ell_c| \epsilon_c}{|V_v|}
	 = \frac{- 2 \pi p / k + 0 + 0}{2 \pi r p / k}
	 = - \frac{1}{r} .
	\end{equation}
	This value is constant for all vertices, is invariant to the number of diagonal edges $\ell_c$ intersected, and gives exactly the mean curvature of a smooth cylinder in $\euc{3}$.
\end{enumerate}
\end{example}

\section{Hinge-Orthogonal Component}
\label{sec:Kh}

Due to the structure of the hinge angles, the most natural values of the second fundamental form are those given by vector fields orthogonal to each hinge. However regions enclosing single hinges can be seen to be insufficient, similar to the mean curvature, which can be shown from both the sphere and cylinder triangulations in example \ref{eg:triang}.


The regular shape of the icosahedron approximation of a $2$-sphere means it can be tessellated regularly into regions $D_h$ associated with each hinge. Using part of the arguments from the proof of lemma \ref{lem:IntKh}, the average value of the component orthogonal to a hinge $h$ can then be given by the expression $|h| \epsilon_h / D_h$. However the resulting value differs from the smooth curvature by a factor of about two. Even more problematic are the diagonal edges $\ell_c$ in the cylinder example. These have zero hinge angles, and so the expression $|\ell_c| \epsilon_c / D_c$ vanishes, which is clearly not the case for the corresponding smooth curvature component. The issue with the sphere can be attributed to the fact that there are $n$ linearly independent vectors at each point, while using tessellations assigns each point to a single curvature component. The cylinder problem implies that a region intersecting a larger collection of hinges is required.



A plausible solution is to follow the mean curvature approach, but to use a union of the vertex regions $V_v$ for the vertices $v$ in the closure of each hinge. Unfortunately the vectors orthogonal to a given hinge cannot be defined over the regions $V_v$ in an unambiguous way. A new set of $n$-dimensional regions is therefore defined as a subspace of this union, where vectors can be parallel transported unambiguously, and where the integrated curvature components are stable to small deviations of hinges across the boundaries internal to each vertex region.



\begin{definition}[Hinge regions]
\label{def:Vh}
The $n$-dimensional regions $V_h$ associated with each hinge $h$ are defined as the union of the regions $V_v$ dual to the vertices $v$ in the closure of $h$, intersected with the geodesic extensions $\gamma^\perp_h$ of the lines orthogonal to $h$,
\begin{equation}
V_h
 := \left(\cup_v \, V_v\right)
 \cap \int_h \gamma^\perp_h \, \mathrm{d} V^{n-1} ,
    \quad \text{such that} \quad
	v \in \bar h .
\end{equation}
\end{definition}


Each region $V_h$ contains all of the hinge $h$, and parts of other hinges, and will overlap with regions of these other hinges. The average curvature orthogonal to each hinge $h$ is now given by integrating over these hinge regions.



\begin{theorem}[Piecewise flat hinge-orthogonal curvature]
\label{thm:alpha_h}
In a piecewise flat manifold $S^n \subset \euc{n+1}$, the average curvature component orthogonal to a hinge $h$ over the region $V_h$ is
\begin{equation}
\label{alpha_h}
\alpha_h
 := \widetilde{\alpha}(\hat \gamma^\perp_h)_{V_h}
 = \frac{1}{|V_h|} \left(
	|h| \epsilon_h
	+ \sum_{i} |h_i \cap V_h| \, \cos^2 \theta_i \, \epsilon_i
	+ O(\epsilon_i^3)
	\right) ,
\end{equation}
with the sum taken over the other hinges $h_i$ intersecting $V_h$, with $\epsilon_i$ representing the hinge angle at $h_i$, and $\theta_i$ the angle between $\gamma^\perp_h$ and the lines orthogonal to each hinge $h_i$.
\end{theorem}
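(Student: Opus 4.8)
The plan is to evaluate the average straight from its definition, $\widetilde{\alpha}(\hat\gamma^\perp_h)_{V_h} = \frac{1}{|V_h|}\int_{V_h}\alpha(\hat\gamma^\perp_h)\,\mathrm{d}V^n$, using the fact that by Definition~\ref{def:Vh} the region $V_h$ is foliated by the geodesic lines $\gamma^\perp_h$ orthogonal to $h$. Because an $(n-1)$-hinge carries no intrinsic curvature (that being confined to co-dimension-two simplices), each such line is intrinsically straight, and the lines provide a well-defined unit field $\hat\gamma^\perp_h$ on $V_h$ whose integral curves sweep $V_h$ out from the base hinge $h$. First I would apply Fubini's theorem to this foliation, writing
\[
\int_{V_h}\alpha(\hat\gamma^\perp_h)\,\mathrm{d}V^n
 = \int_h\left(\int_{\gamma^\perp_h}\alpha(\hat\gamma^\perp_h)\,\mathrm{d}s\right)\mathrm{d}V^{n-1},
\]
the volume element splitting as $\mathrm{d}V^n = \mathrm{d}s\,\mathrm{d}V^{n-1}$ precisely because the fibres meet $h$ orthogonally.

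Next I would evaluate the inner path integral with Theorem~\ref{thm:PathIntS}. Along a fixed orthogonal line the only contributions come from hinge crossings: the crossing of $h$ itself is orthogonal, so $\theta=0$ and it contributes exactly $\epsilon_h$ with no higher-order term, while each crossing of another hinge $h_i$ happens at the angle $\theta_i$ between $\hat\gamma^\perp_h$ and the normal to $h_i$, contributing $\cos\theta_i\,\epsilon_i + O(\epsilon_i^3)$. Integrating the $\epsilon_h$ term over the base gives $|h|\epsilon_h$ at once. For the $h_i$ terms the point to settle is that the set of base points $y\in h$ whose orthogonal line actually meets $h_i$ is the shadow of $h_i\cap V_h$ cast onto the hyperplane of $h$ along $\hat\gamma^\perp_h$, so the integral of each $\cos\theta_i\,\epsilon_i$ contribution over $h$ equals $\cos\theta_i\,\epsilon_i$ times the measure of this shadow.

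The geometric heart of the argument, and the step I expect to be the main obstacle, is computing that shadow measure. Since $\hat\gamma^\perp_h$ is normal to the hyperplane carrying $h$, projection along it is orthogonal projection of the $(n-1)$-plane of $h_i$ onto the $(n-1)$-plane of $h$, whose $(n-1)$-area scaling factor is $\cos\theta_i$; hence the shadow has measure $\cos\theta_i\,|h_i\cap V_h|$. Multiplying this by the $\cos\theta_i$ already supplied by the path integral produces the $\cos^2\theta_i$ weighting, and assembling the contributions gives
\[
\int_{V_h}\alpha(\hat\gamma^\perp_h)\,\mathrm{d}V^n
 = |h|\epsilon_h + \sum_i |h_i\cap V_h|\,\cos^2\theta_i\,\epsilon_i + O(\epsilon_i^3),
\]
so dividing by $|V_h|$ yields the claim. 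In filling in the details I would check that the error terms remain $O(\epsilon_i^3)$ after integration, that $\theta_i$ is effectively constant along the fibres within $V_h$, and that each crossing is counted once; the unambiguous orthogonal extensions used to define $V_h$ are what keep these under control.
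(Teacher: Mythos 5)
Your proposal is correct and follows essentially the same route as the paper's proof: decompose the volume integral over $V_h$ into path integrals along the orthogonal geodesics fibred over $h$, evaluate each crossing via Theorem \ref{thm:PathIntS}, and account for the oblique hinges $h_i$ through the projected cross-section of measure $|h_i \cap V_h|\cos\theta_i$, which combines with the $\cos\theta_i$ from the path integral to give the $\cos^2\theta_i$ weighting. Your explicit invocation of Fubini and the ``shadow'' projection argument simply spells out in more detail what the paper asserts directly about the cross-sectional $(n-1)$-volume.
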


\begin{figure}[h]
\centering
\includegraphics[scale=1]{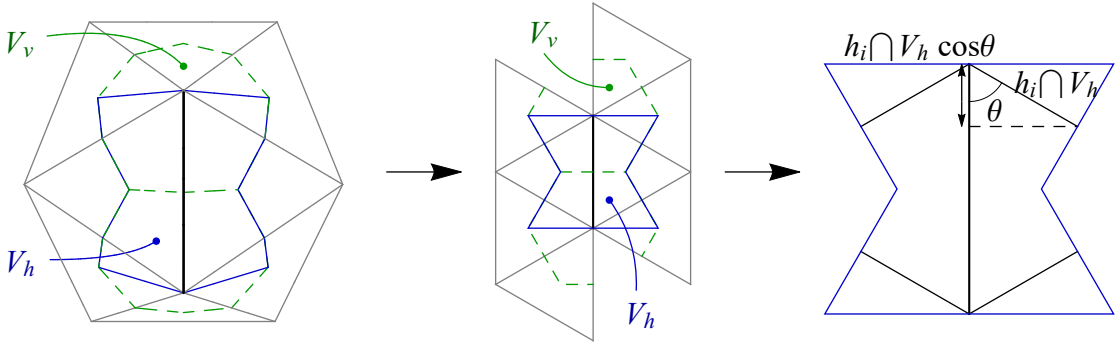}
\caption{Hinge region for an icosahedron in $\euc{3}$.}
\label{fig:IIh}
\end{figure}

\begin{proof}
For each geodesic segment $\gamma := \gamma^\perp_h \ \cap \ V_h$, the path integral of the second fundamental form $\alpha(\hat \gamma)$ along $\gamma$ is given by the sum of the contributions from each hinge it intersects,
\begin{equation}
  a_{\gamma}
 = a_h + \sum_i a^{\theta_i}_{h_i}
 = \epsilon_h
 + \sum_i \cos \theta_i \, \epsilon_i
 + O(\epsilon_i^3) .
\end{equation}

\

\noindent
The integral of $\alpha(\hat \gamma)$ over the entire region $V_h$ is then given by integrating these path integrals over all of $h$, which can also be given by a sum of contributions from each hinge,
\begin{equation}
\int_{V_h} \alpha(\hat \gamma) \, \mathrm{d} V^n
 = \int_h
   a_{\gamma} \, \mathrm{d} V^{n-1}
 = |h| \epsilon_h
 + \sum_i \int_h a^{\theta_i}_{h_i} \, \mathrm{d} V^{n-1} .
\label{alpha_h_2}
\end{equation}
For each hinge $h_i$, the cross-sectional $(n-1)$-volume of the geodesic segments $\gamma$ which intersect $h_i$ is $|h_i \cap V_h| \cos \theta_i$, see figure \ref{fig:IIh} for example. Since $\theta_i$ and $\epsilon_i$ are constant over each hinge $h_i$, the final expression for the integral of $\alpha(\hat \gamma)$ over $V_h$ is
\begin{equation}
\int_{V_h} \alpha(\hat \gamma) \mathrm{d} V^n
 = |h| \epsilon_h
 + \sum_i |h_i \cap V_h| \cos^2 \theta_i \, \epsilon_i 
 + O(\epsilon_i^3) .
\end{equation}
The average value is then given by dividing by the $n$-volume of the region $V_h$.
\end{proof}

\


For a two dimensional piecewise flat manifold $S^2 \subset \euc{3}$, the second fundamental form can be completely determined within each triangle, using the curvature orthogonal to each of the edges (hinges) $\ell \subset S^2$. For any given triangle, with edges denoted $\ell_1$, $\ell_2$ and $\ell_3$,
\begin{equation}
   |\ell_1| \, \hat \ell^\perp_1
 + |\ell_2| \, \hat \ell^\perp_2
 + |\ell_3| \, \hat \ell^\perp_3
 = 0 .
\end{equation}
Taking the normal vectors to two of the sides within $S^2$ as basis vectors, the relation above can be used instead of the difference vector in equation (\ref{SFFuv}) to give the second fundamental biliear form for mixed arguments as
\begin{eqnarray}
\alpha(\hat \ell^\perp_1, \hat \ell^\perp_2)
 &=& \frac{1}{|\ell_1| \, |\ell_2|}\left(
 - \alpha(|\ell_1| \, \hat \ell^\perp_1)
 - \alpha(|\ell_2| \, \hat \ell^\perp_2)
 + \alpha(|\ell_1| \, \hat \ell^\perp_1 + |\ell_2| \, \hat \ell^\perp_2)
 \right) \nonumber \\
 &=& \frac{1}{|\ell_1| \, |\ell_2|}\left(
 - |\ell_1|^2 \, \alpha_{\ell_1}
 - |\ell_2|^2 \, \alpha_{\ell_2}
 + |\ell_3|^2 \, \alpha_{\ell_3}
 \right) .
\end{eqnarray}
This gives a complete extrinsic curvature tensor within each triangle, using only the hinge-orthogonal curvatures.

\ \\

\begin{example}[Hinge-orthogonal piecewise flat curvature components] \
\label{eg:mean}

\begin{enumerate}
\item For a piecewise linear curve in $\euc{2}$ the curvature component $\alpha_h \equiv H_v$. In the case of the $k$-sided regular polygon, this gives a curvature component of $\alpha = - 1/r$ which is equal to the smooth curvature of a circle in $\euc{2}$.

\item The hinge region $V_h$ for the icosahedron consists of two thirds of the area of each of the equilateral triangles on either side of the hinge (edge) $h$, and a sixth of the four triangles bounding these, see figure \ref{fig:IIhEg}, so that $|V_h| = 2 \times 4 \pi r^2 / 20 = 2 \pi r^2 / 5$. There are four other edges with half of their lengths intersecting this volume, each making an angle of $\theta = \pi/3$ with $h$. The curvature component orthogonal to each hinge is therefore
\begin{equation}
\alpha_h
 = \frac{|\ell| \epsilon
   + 4\left(\frac{1}{2} |\ell| \cos^2 \frac{\pi}{3} \, \epsilon\right)
   }{|V_h|}
 = \frac{15}{4 \pi r} \, \sqrt{\frac{4 \pi}{5 \sqrt{3}}} \,
   \left( - \arccos\left[\frac{\sqrt{5}}{3}\right] \right)
 \simeq - \frac{1.04926}{r} ,
\end{equation}
which approximates the curvature components of a smooth $2$-sphere, $\alpha(\vec{\hat u}) = - 1/r$, to the same level of accuracy as the mean curvature.

\item Since the piecewise flat approximation of the cylinder is intrinsically flat, with zero deficit angles around each vertex, the hinge regions will consist of half of the areas of each of the two vertex regions, so $|V_a| = |V_b| = |V_c| = 2 \pi r p / k$. For the hinge regions $V_a$ and $V_b$, there are edges $\ell_b$ and $\ell_a$ on the boundaries, with $q$ representing the number of diagonal edges $\ell_c$ (from zero to four).

\begin{figure}[h!]
	\centering
	\includegraphics[scale=1]{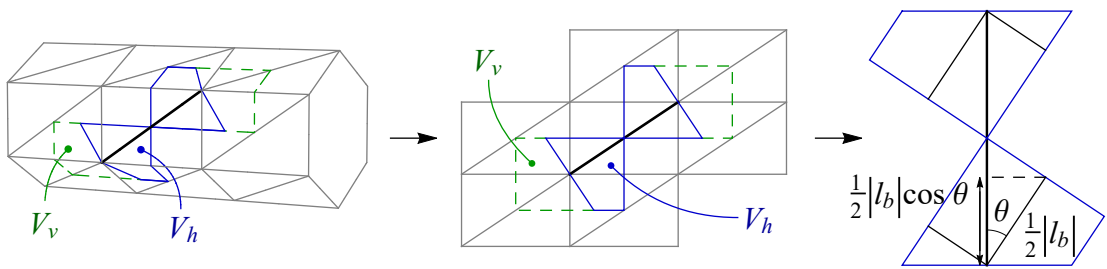}
	\caption{Hinge region for cylinder triangulation edge $\ell_c$.}
	\label{fig:IIhEg}
\end{figure}

The angles between each pair of edges are $\theta_{ab} = \pi/2$, $\theta_{ac} = \arctan\left[|\ell_b| / |\ell_a|\right]$ and $\theta_{bc} =\arctan\left[|\ell_a| / |\ell_b|\right]$, and the curvature components orthogonal to each type of hinge are
\begin{align}
\alpha_a
&= \frac{|\ell_a| \epsilon_a
	+ 2 \frac{1}{2} |\ell_b| \cos^2 \frac{\pi}{2} \, \epsilon_b
	+ q \frac{1}{2} |\ell_c| \cos^2 \theta_{a c} \, \epsilon_c
	}{|V_a|}
 = 0 ,
\\
\alpha_b
&= \frac{|\ell_b| \epsilon_b
	+ 2 \frac{1}{2} |\ell_a| \cos^2 \frac{\pi}{2} \, \epsilon_a
	+ q \frac{1}{2} |\ell_c| \cos^2 \theta_{b c} \, \epsilon_c
    }{|V_b|}
 = - \frac{1}{r} ,
\\
\alpha_c
&= \frac{|\ell_c| \epsilon_c
	+ 2 \frac{1}{2} |\ell_a| \cos^2 \theta_{a c} \, \epsilon_a
	+ 2 \frac{1}{2} |\ell_b| \cos^2 \theta_{b c} \, \epsilon_b
	}{|V_a|}
 = - \frac{\cos^2 \theta_{b c}}{r} .
\end{align}
These exactly match the smooth values, where the curvature component for a unit vector $\vec{\hat u}$ tangent to a cylinder, making an angle $\theta$ with a circular cross-section in $\euc{3}$, is $\alpha(\vec{\hat u}) = - \cos^2 \theta / r$.
\end{enumerate}
\end{example}

\section{Generalized Embeddings}
\label{sec:GenEmb}





For a smooth manifold $M^n \subset \euc{n+m}$ the second fundamental form is valued in the normal tangent bundle to $M^n$ in $\euc{n+m}$. For $m = 1$, $M^n$ is a hypermanifold and the normal bundle is one dimensional, so the scalar values of the magnitude is all that is required to determine the extrinsic curvature. For $m > 1$, the second fundamental form is instead defined as
\begin{equation}
\alpha (\vec{u}, \vec{v})
 := \left< \nabla_{\vec{v}} \vec{u}, \hat n_{\alpha} \right> \hat n_\alpha ,
\end{equation}
where $\hat n_\alpha$ is the unit normal vector to $M^n$ which gives the projection of $\nabla_{\vec{v}} \vec{u}$ into the normal bundle. As can be seen by the Serret-Frenet frame for a curve in $\euc{3}$, see for example \cite{SulDDG}, the curvature is no longer sufficient to describe the embedding of $M^n$ into $\euc{n+m}$ completely, with information about the variation in $\hat n_\alpha$ over $M^n$ also required.

Not all piecewise flat manifolds can be embedded in a Euclidean space of only one extra dimension, but can be embedded if the dimension of the Euclidean space is increased. For an embedding of $S^n$ into $\euc{n+m}$, with $m > 1$, the normal vector to each $n$-simplex is no longer unique. However for each hinge $h$ there is an unambiguous subspace $\euc{n+1} \subset \euc{n+m}$ containing the two $n$-simplices on either side of $h$, where the angle $\epsilon_h$ can be defined. The hinge angle can then be multiplied by the unit vector $\hat n_h$, normal to $h$ in $\euc{n+1}$, which makes equal angles with the normal vectors to each $n$-simplex in the star of $h$. Since these $(n+1)$-dimensional spaces will not be consistent across all hinges, the relationship between them is also required for a complete description of the embedding, as with the smooth case.

The integral of the \emph{magnitude} of the second fundamental form, tangent to a geodesic curve $\gamma \subset M^n \subset \euc{n+m}$, will still be given by $a_\gamma$ from lemma \ref{lem:PathIntM}. As a result, the integral of the magnitude of the piecewise flat curvature across a hinge $h \subset S^n \subset \euc{n+m}$ will still be given by $a_h^\theta$ from theorem \ref{thm:PathIntS}. Equations (\ref{H_v}) and (\ref{alpha_h}) then give the average magnitudes of the mean and hinge-orthogonal curvatures. Orientations for these curvatures could then be determined by a weighted average of the unit normal vectors at each hinge. However the details should depend on a complete piecewise flat version of the smooth variation of the unit normal vectors $\hat n_\alpha$.




\section{Conclusion}
\label{sec:Con}


Expressions have been given for the piecewise flat mean curvature at each vertex (\ref{H_v}) and the extrinsic curvature components orthogonal to the hinges (\ref{alpha_h}) of a piecewise flat manifold $S^n \subset \euc{n+1}$, and shown to give good approximations for triangulations of a circle in $\euc{2}$ and a sphere and cylinder in $\euc{3}$. Since these expressions depend on a collection of hinges, they should be stable to different triangulations of the same smooth manifold, as long as the triangulations give small deficit and hinge angles everywhere. The definitions are also not dependent on any individual choice of dual tessellation, as long as the properties of definition \ref{def:V_v} are satisfied, and can be used in any dimension.



\section*{Acknowledgements}

I'd like to thank Warner A. Miller, Maximilian Hanush and Christopher Duston for many helpful discussions. 


\bibliography{Ref}
\bibliographystyle{unsrt}

\end{document}